\newtheorem{theorem}{Theorem}
\newtheorem{propo}{Proposition}
\newtheorem{lem}{Lemma}
\newtheorem{exam}{Example}
\newtheorem{coro}{Corollary}
\newtheorem{define}{Definition}
\newtheorem{remark}{Remark}
\newcommand{\F}{\mathbb F}
\newcommand{\N}{\mathbb N}
\newcommand{\Ord}{\mathcal O}
\newcommand{\Lcm}{\mathrm {lcm}}
\newcommand{\Gcd}{\mathrm {gcd}}
\newcommand{\Tr}{\mathrm{Tr}}
\newcommand{\Char}{\mathrm{char}}
\newcommand{\ord}{\mathrm{ord}}
\begin{document}

\begin{frontmatter}

\title{Nilpotent linearized polynomials over finite fields and applications}
\author{Lucas Reis}
\ead{lucasreismat@gmail.com}
\address{Departamento de Matem\'{a}tica, Universidade Federal de Minas Gerais, UFMG, Belo Horizonte, MG, 30123-970, Brazil}
\begin{abstract}
Let $q$ be a prime power and $\F_{q^n}$ be the finite field with $q^n$ elements, where $n>1$. We introduce the class of the linearized polynomials $L(x)$ over $\F_{q^n}$ such that $$L^{(t)}(x):=\underbrace{L(L(\cdots(x)\cdots))}_{t \quad\text{times}}\equiv 0\pmod {x^{q^n}-x}$$ for some $t\ge 2$, called \textit{nilpotent linearized polynomials} (NLP's). We discuss the existence and construction of NLP's and, as an application, we show how to construct permutations of $\F_{q^n}$ from these polynomials. For some of those permutations, we can explicitly give the compositional inverse map and the cycle structure. This paper also contains a method for constructing involutions over binary fields with no fixed points, which are useful in block ciphers.
\end{abstract}
\begin{keyword}
{Linearized polynomials, Permutation polynomials, Cycle structure, Involutions}

2010 MSC: 12E20 \sep 11T06
\end{keyword}
\end{frontmatter}

\section{Introduction}
Let $q$ be a prime power and $\F_{q^n}$ be the finite field with $q^n$ elements, where $n>1$. Any map from $\F_{q^n}$ to itself can be represented by a polynomial in $\F_{q^n}[x]$. Conversely, any polynomial in $\F_{q^n}[x]$ induces a map from $\F_{q^n}$ to itself. In this context, the $\F_q-$linear maps of $\F_{q^n}$ corresponds to the so called linearized polynomials $L(x)=\sum_{i=0}^{k} a_ix^{q^i}$, $a_i\in \F_{q^n}$. If a polynomial $f(x)\in \F_{q^n}[x]$ induces a permutation in $\F_{q^n}$ we say that $f(x)$ is a permutation polynomial over $\F_{q^n}$.
For many applications in coding theory \cite{cod} and cryptography \cite{cryp}, it is interesting to find new families of permutation polynomials over finite fields. For instance, in block ciphers, permutations of binary fields are used as S-boxes to build a confusion layer in the encryption process and the inverse of this permutation is used in the decryption process. In order to avoid some problems like limited memory, it is interesting to use involutions of binary fields, i.e., permutation polynomials $f(x)\in \F_{2^n}[x]$ such that $f^{-1}(x)=f(x)$ or, equivalently, $f(f(x))=x$. However, a random permutation in $\F_{2^n}$ has $O(1)$ fixed points, while a random involution has $2^{n/2}+O(1)$ fixed points. Therefore an involution with more than $O(1)$ fixed points can be distinguished from random permutations and so can be attacked. In fact, as it was suggested in \cite{canteaut}, the involutions should have no fixed points. For more information about construction and properties of permutation polynomials, see \cite{pan}.

In this paper we introduce the class of the \textit{nilpotent linearized polynomials} (NLP's), i.e., linearized polynomials $L(x)\in \F_{q^n}[x]$ such that
$$L^{(t)}(x)\equiv 0\pmod {x^{q^n}-x}$$
for some $t\ge 2$, where $L^{(t)}(x)$ denotes the ordinary polynomial composition of $L(x)$ with itself $t$ times.

We study the existence and construction of those polynomials, including explicit examples. We describe a method for constructing permutation and complete permutation polynomials from those nilpotent polynomials and, in some particular cases, we determine the compositional inverse map and describe the cycle structure. This paper also includes explicit examples of involutions over binary fields which have no fixed points.

\section{Existence and properties of NLP's}
Throughout this paper, $\F_{q^n}$ denotes the finite field with $q^n$ elements, where $q$ is a prime power and $n>1$. A polynomial $L(x)\in \F_{q^n}[x]$ is said to be \textit{linearized} if $L(x)=\sum_{i=0}^{k}a_ix^{q^i}$. Notice that if $L(x)$ is linearized, then $L(z+y)=L(z)+L(y)$ and $L(az)=aL(z)$ for any $a\in \F_{q}$ and $y, z\in \F_{q^n}$, hence $L(x)$ induces an $\F_q-$linear map of $\F_{q^n}$. Conversely, if $\{\omega_1, \cdots, \omega_n\}$ is any basis of $\F_{q^n}$ over $\F_q$, then the matrix $D=(\omega_i^{q^{j-1}})_{ij}$ is invertible and then, for any $\F_q-$linear map $M$ of $\F_{q^n}$ we have that $L(x)=\sum_{i=0}^{n-1}a_ix^{q^i}$ is the linearized polynomial representation of $M$, where
$$(a_0, \cdots, a_{n-1})^T=D^{-1}(b_1, \cdots, b_n)^T,$$
$b_i=M(\omega_i)$ and $^T$ denotes the transpose.
This is an one-to-one correspondence between the $\F_q-$linear maps of $\F_{q^n}$ and the linearized polynomials $L(x)=\sum_{i=0}^{n-1}a_ix^{q^i}\in \F_{q^n}[x]$. 

\begin{remark}
If $L(x)=\sum_{i=0}^ka_ix^{q^i}\in \F_{q^n}[x]$ and $k>n-1$, then the $\F_q-$linear map of $\F_{q^n}$ induced by $L(x)$ can be represented by another linearized polynomial of the form $L_0(x)=\sum_{i=0}^{n-1}b_ix^{q^i}\in \F_{q^n}[x]$. In fact, $L_0(x)$ is the reduction of $L(x)$ modulo $x^{q^n}-x$. For this reason we will be mostly interested in the linearized polynomials of the form $\sum_{i=0}^{n-1}a_ix^{q^i}$.\end{remark}

\begin{define} If $t\ge 2$ is an integer, we say that $L(x)\in \F_{q^n}[x]$ is a $t-$\textit{nilpotent linearized polynomial} ($t-$NLP) over $\F_{q^n}$ if $L(x)$ is a linearized polynomial such that $L(x)\not\equiv 0\pmod {x^{q^n}-x}$ and
$$L^{(t)}(x)\equiv 0\pmod {x^{q^n}-x}.$$
In other words, $L^{(t)}(x)$ is the zero function when restricted to $\F_{q^n}$ and $L(a)\ne 0$ for some $a\in\F_{q^n}$.
\end{define}
It follows from definition that $L(x)$ is a $t-$NLP over $\F_{q^n}$ if, and only if, its polynomial reduction modulo $x^{q^n}-x$ is a $t-$NLP over $\F_{q^n}$. Moreover, any $t-$NLP is also a $d-$NLP for every $d>t$.

If $f(x)\in \F_{q^n}[x]$, we denote $Z_f=\{z\in \F_{q^n}| f(z)=0\}$ the set of the roots of $f(x)$ in $\F_{q^n}$ and $V_f=\{f(z); z\in \F_{q^n}\}$ its value set over $\F_{q^n}$. If $L(x)$ is a linearized polynomial over $\F_{q^n}$, then $V_L$ and $ Z_L$ are $\F_q$-vector spaces. In fact $V_L$ and $Z_L$ are, respectively, the image and the kernel of the linear map of $\F_{q^n}$ induced by $L$. 

The following theorem gives a necessary and sufficient condition for the existence of $t-$NLP's over $\F_{q^n}$ with prescribed value set $V$.

\begin{theorem}\label{main}
Let $V\subseteq \F_{q^n}$ be any $\F_q-$vector space. Then there exist an integer $t\ge 2$ and a $t-$NLP over $\F_{q^n}$ such that $V=V_L$ if, and only if, $V\ne \{0\}, \F_{q^n}$.
\end{theorem}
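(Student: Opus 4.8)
The plan is to translate the whole statement into linear algebra by means of the correspondence recalled at the start of this section: a linearized polynomial reduced modulo $x^{q^n}-x$ is the same datum as an $\F_q$-linear operator on the $n$-dimensional space $\F_{q^n}$, the composition $L^{(t)}$ corresponds to the $t$-fold iterate of that operator, the value set $V_L$ is its image, and the condition that $L$ be a $t$-NLP says exactly that the associated operator is nonzero but nilpotent (its $t$-th iterate is the zero map). Under this dictionary the theorem reads: for an $\F_q$-subspace $V\subseteq\F_{q^n}$ there is a nonzero nilpotent $\F_q$-linear operator with image $V$ if and only if $V\ne\{0\},\F_{q^n}$.

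For \emph{necessity} I would argue directly. If $L$ is a $t$-NLP then $L\not\equiv 0$, so its image $V_L$ is a nonzero subspace and $V\ne\{0\}$. If instead $V_L=\F_{q^n}$, then the induced operator is surjective, hence bijective on the finite-dimensional space $\F_{q^n}$; but then every iterate $L^{(t)}$ is again a bijection, and a bijection of a set with $q^n>1$ elements is certainly not the zero map, contradicting $L^{(t)}\equiv 0$. Thus $V\ne\F_{q^n}$.

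For \emph{sufficiency}, given $\{0\}\subsetneq V\subsetneq\F_{q^n}$, set $m=\dim_{\F_q}V$, so $1\le m\le n-1$, and $k:=n-m\ge 1$. The idea is to produce a single nilpotent operator whose image is exactly $V$ by routing all of $V$ through one long Jordan chain. Fix a basis $v_1,\dots,v_m$ of $V$ and extend it to a basis $v_1,\dots,v_m,w_1,\dots,w_k$ of $\F_{q^n}$. Define the $\F_q$-linear map $L$ on this basis by $L(w_1)=v_1$, $L(v_i)=v_{i+1}$ for $1\le i\le m-1$, $L(v_m)=0$, and $L(w_j)=0$ for $2\le j\le k$. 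Then the image of $L$ is $\an{v_1,\dots,v_m}=V$, so $V_L=V$; iterating the chain $w_1\mapsto v_1\mapsto\cdots\mapsto v_m\mapsto 0$ gives $L^{(m+1)}\equiv 0$ while $L(w_1)=v_1\ne 0$, so $L$ is an $(m+1)$-NLP. The explicit linearized polynomial realizing $L$ is then obtained from the matrix of $L$ in this basis via the $D^{-1}$ formula recalled above.

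The only delicate point, and hence the \emph{main obstacle}, is making the construction work uniformly for every admissible dimension $m$. The naive attempt --- send a complementary basis onto $V$ and annihilate $V$, which gives $L^{(2)}\equiv 0$ --- succeeds only when $m\le k$, i.e.\ $\dim V\le n/2$, since one needs at least $\dim V$ complementary vectors to surject onto $V$. Passing all of $V$ through a single Jordan chain headed by one vector outside $V$ removes this restriction and lets me control the rank (hence the image) and the nilpotency simultaneously, covering all $m$ with $1\le m\le n-1$ at once.
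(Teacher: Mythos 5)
Your proof is correct and takes essentially the same route as the paper: the necessity argument (a surjective, hence bijective, operator on $\F_{q^n}$ cannot have a vanishing iterate) is the paper's, and your single-chain construction $w_1\mapsto v_1\mapsto v_2\mapsto\cdots\mapsto v_m\mapsto 0$ with all other basis vectors annihilated is exactly the paper's map $M$ (there written $\omega_n\mapsto\omega_1\mapsto\cdots\mapsto\omega_k\mapsto 0$), giving the same $(\dim_{\F_q}V+1)$-NLP via the same matrix-inversion dictionary. Your closing observation that the naive rank-$\dim V$ two-step map only works when $\dim V\le n/2$ is a nice justification of why the chain is needed, but the construction itself coincides with the paper's.
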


\begin{proof}
Suppose that $t\ge 2$ and $L(x)$ is a $t-$NLP over $\F_{q^n}$ such that $V=V_L$. Notice that $L(a)\ne 0$ for some $a\in \F_{q^n}$, hence $V_L\ne \{0\}$. Since $L^{(t)}(z)=0$ for any $z\in \F_{q^n}$, the $\F_q-$linear map of $\F_{q^n}$ induced by $L(x)$ cannot be an isomorphism, hence $Z_L\ne \{0\}$ and then $V_L\ne \F_{q^n}$. Conversely, suppose that $V \ne \{0\}, \F_{q^n}$ and let $\{\omega_1, \cdots, \omega_k\}$ be any basis of $V$ over $\F_{q}$; clearly $k\ne 0, n$. Let $\omega_{k+1}, \cdots, \omega_n$ be elements of $\F_{q^n}$ such that $\{\omega_1, \cdots, \omega_n\}$ is a basis of $\F_{q^n}$ over $\F_{q}$ and $M$ be the $\F_q-$linear map of $\F_{q^n}$ defined as follows: 
$$M(\omega_i)=\begin{cases}\omega_1 & \text{if}\quad  i=n\\   \omega_{i+1} &\text{if} \quad 1\le i\le k-1 \\ 0  &\text{if} \quad k\le i\le n-1.\end{cases}$$
Since $0<k<n$ and $n>1$, $M$ is well defined and a direct calculation shows that $V_M=V$ and $M^{(k+1)}(\omega_i)=0$ for any $1\le i\le n$. Hence $M^{(k+1)}(z)=0$ for any $z\in \F_{q^n}$. Then $L(x)\in\F_{q^n}[x]$, the linearized polynomial representation of $M$, is a $(k+1)-$NLP over $\F_{q^n}$ and satisfies $V_L=V$.

\end{proof}

As it was noticed at the beginning of this section, for a given basis of $\F_{q^n}$ over $\F_q$, the construction of the linearized polynomial associated to a linear map requires only the calculation of the inverse of a matrix. In this context, the proof of Theorem \ref{main} suggests a computational method for constructing $t-$NLP's with a given value set. However, we can find explicit examples of such polynomials: 

\begin{exam}
Let $\Tr_{L/K}(x)=\displaystyle\sum_{i=0}^{\frac{n}{m}-1}x^{q^{im}}$ be the trace of $L=\F_{q^n}$ over an subfield $K$ of the form $\F_{q^m}$. If $\theta$ is an element of $\F_{q^n}^*$ such that $\Tr_{L/K}(\theta)=0$, then $$L_{\theta}(x)=\theta \cdot \Tr_{L/K}(x)$$ is a $2-$NLP over $\F_{q^n}$ and its value set over $\F_{q^n}$ is given by $\theta \cdot \F_{q^m}$. In particular, if $\frac{n}{m}$ is divisible by $p=\Char(\F_q)$, then $L(x)=\Tr_{L/K}(x)=\displaystyle\sum_{i=0}^{\frac{n}{m}-1}x^{q^{im}}$ is a $2-$NLP over $\F_{q^{n}}$.
\end{exam}

\begin{exam}\label{half} Let $m$ be any positive integer and $n=2m$. If $\alpha$ and $\beta$ are two elements in $\F_{q^n}^{*}$ such that $\alpha^{q^m}+\alpha=0$ and $\beta^{q^m+1}=1$, a direct calculation shows that $$L_{\alpha, \beta}(x)=\alpha\beta x^{q^m}+\alpha x$$ is a $2-$NLP over $\F_{q^n}$. The equations $x^{q^m}+x=0$ and $x^{q^{m}+1}=1$ have, respectively, $q^m-1$ and $q^m+1$ solutions over $\F_{q^n}^*$. Hence there are $q^n-1$ polynomials of the form $L_{\alpha, \beta}(x)$.
\end{exam}

\subsection{NLP's in $\F_{q}[x]$}
Here we give a complete characterization of the $t-$NPL's over $\F_{q^n}$ such that their coefficients lie on the base field, i.e, $L(x)\in \F_q[x]$. 
First we need to recall some concepts of the theory of linearized polynomials which can be found in \cite{LiNi}, Section 3.4.

\begin{define}
If $L_1(x), L_2(x)\in \F_{q^n}[x]$ are linearized polynomials we define their \textit{symbolic product} by 
$$L_1(x)\otimes L_2(x)=L_1(L_2(x)),$$
which also is a linearized polynomial.
\end{define}
 A simple calculation shows that the symbolic product $\otimes$ is associative, distributive with respect to the ordinary addition, but is not commutative. However, if $L_1(x), L_2(x)\in \F_q[x]$ it can be verified that $L_1(x)\otimes L_2(x)=L_2(x)\otimes L_1(x)$. 

\begin{define}
Let $L(x)=\sum_{i=0}^{t} a_i x^{q^i}\in \F_{q^n}[x]$ be a linearized polynomial and $l(x)=\sum_{i=0}^{t} a_i x^{i}$. The polynomials $l(x)$ and $L(x)$ are called $q-$associates of each other. More specifically, $l(x)$ is called the conventional $q-$associate of $L(x)$ and $L(x)$ is called the linearized $q-$associate of $l(x)$.
\end{define}

The following lemma shows an interesting property of the linearized polynomials $L(x)\in \F_q[x]$:

\begin{lem}[\cite{LiNi}, Lemma 3.59]\label{lini}
 Let $L_1(x)$ and $L_2(x)$ be linearized polynomials with conventional $q-$associates $l_1(x)$ and $l_2(x)$, respectively. If the coefficients of $L_1$ and $L_2$ lie on the base field $\F_q$, then the polynomials $l(x)=l_1(x)\cdot l_2(x)$ and $L(x)=L_1(x)\otimes L_2(x)$ are $q-$associates. 
\end{lem}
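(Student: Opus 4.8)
The plan is to compute the symbolic product $L(x)=L_1(x)\otimes L_2(x)=L_1(L_2(x))$ directly and to match the coefficient of each power $x^{q^k}$ against the corresponding coefficient of the ordinary product $l(x)=l_1(x)\cdot l_2(x)$. Write $L_1(x)=\sum_{i=0}^{s}a_ix^{q^i}$ and $L_2(x)=\sum_{j=0}^{r}b_jx^{q^j}$ with all $a_i,b_j\in\F_q$, so that $l_1(x)=\sum_i a_ix^i$ and $l_2(x)=\sum_j b_jx^j$. Substituting $L_2(x)$ into $L_1$ gives
$$L_1(L_2(x))=\sum_{i=0}^{s}a_i\left(\sum_{j=0}^{r}b_jx^{q^j}\right)^{q^i},$$
and everything then hinges on simplifying the inner $q^i$-th power.

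The key step is to expand $(\,\cdot\,)^{q^i}$ using the fact that the map $f\mapsto f^{q^i}$ is additive on $\F_{q^n}[x]$: since $\Char(\F_{q^n})=p$ and $q^i$ is a power of $p$, the Frobenius identity $(f+g)^{q^i}=f^{q^i}+g^{q^i}$ applies term by term, whence
$$\left(\sum_{j}b_jx^{q^j}\right)^{q^i}=\sum_j b_j^{q^i}x^{q^{i+j}}.$$
Here I would invoke the hypothesis that the coefficients lie on the base field: because $b_j\in\F_q$ we have $b_j^{q}=b_j$ and therefore $b_j^{q^i}=b_j$ for every $i$. This is precisely the point where the assumption $L_1,L_2\in\F_q[x]$ is indispensable; without it the spurious factors $b_j^{q^i}$ would survive and the desired correspondence would break down. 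After this simplification, $L(x)=\sum_{i,j}a_ib_j\,x^{q^{i+j}}$.

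Finally I would collect terms according to the exponent $q^k$. Grouping by $k=i+j$, the coefficient of $x^{q^k}$ in $L(x)$ equals $\sum_{i+j=k}a_ib_j$, which is exactly the Cauchy (convolution) coefficient of $x^k$ in the ordinary product $l_1(x)\cdot l_2(x)=l(x)$. Hence $L(x)$ is the linearized $q$-associate of $l(x)$, i.e.\ $l(x)$ and $L(x)$ are $q$-associates, as claimed. The only genuinely delicate point is the combined additivity-and-Frobenius-fixing step above; the remainder is routine bookkeeping of indices.
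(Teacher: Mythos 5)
Your proof is correct: the paper itself gives no argument for this lemma, deferring to \cite{LiNi}, Lemma 3.59, and your computation (expand $L_1(L_2(x))$, split the $q^i$-th power additively via Frobenius, use $b_j^{q^i}=b_j$ for $b_j\in\F_q$, then read off the convolution coefficient $\sum_{i+j=k}a_ib_j$ of $x^{q^k}$) is exactly the standard textbook proof of that cited result. One cosmetic remark: your computation in fact only invokes $\F_q$-rationality for the coefficients of $L_2$, so calling the full hypothesis $L_1,L_2\in\F_q[x]$ indispensable at that step slightly overstates what is used, though it is of course needed for the symmetric statement and its consequences in the paper.
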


Using Lemma \ref{lini} and the following proposition we will give necessary and sufficient conditions in order for $L(x)$ to be a $t-$NLP over $\F_{q^n}$ in the case when $L(x)\in \F_{q}[x]$.

\begin{propo}\label{comp}
Suppose that the polynomial $g(x)\in \F_{q}[x]$ satisfies $g(x+a)=g(x)$ for every $a\in \F_{q^n}$. Then there exists a polynomial $R(x)\in \F_{q}[x]$ such that $g(x)=R(x^{q^{n}}-x)$. In particular, if $g(x)$ is linearized, then so is $R(x)$.
\end{propo}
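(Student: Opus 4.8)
The engine of the proof is the single identity that $h(x):=x^{q^n}-x$ is fixed by every shift $x\mapsto x+a$ with $a\in\F_{q^n}$: since $y\mapsto y^{q^n}$ is additive in characteristic $p=\Char(\F_q)$ and $a^{q^n}=a$, we get $(x+a)^{q^n}-(x+a)=x^{q^n}+a-x-a=x^{q^n}-x$. Note $h(x)$ is monic of degree $q^n$. The plan is to expand $g$ in ``base $h$'': repeated Euclidean division by the monic polynomial $h$ yields a unique representation $g(x)=\sum_{i\ge 0}c_i(x)\,h(x)^i$ with each digit $c_i(x)\in\F_q[x]$ of degree $<q^n$. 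Applying the shift and using $h(x+a)=h(x)$ gives $g(x+a)=\sum_i c_i(x+a)h(x)^i$, so the hypothesis $g(x+a)=g(x)$ becomes $\sum_i (c_i(x+a)-c_i(x))h(x)^i=0$. Because $c_i(x+a)$ and $c_i(x)$ share the same leading term, each difference $c_i(x+a)-c_i(x)$ still has degree $<q^n$, so the uniqueness of the base-$h$ expansion forces $c_i(x+a)=c_i(x)$ for every $i$ and every $a\in\F_{q^n}$.

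It then remains to observe that a polynomial $c(x)$ of degree $<q^n$ satisfying $c(x+a)=c(x)$ for all $a\in\F_{q^n}$ must be constant: evaluating the identity at $x=0$ gives $c(a)=c(0)$ for all $a\in\F_{q^n}$, so $c(x)-c(0)$ has at least $q^n$ distinct roots while having degree $<q^n$, hence it vanishes. Thus each $c_i$ is a constant of $\F_q$, and setting $R(y)=\sum_i c_i y^i\in\F_q[y]$ gives $g(x)=R(x^{q^n}-x)$, as claimed.

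For the final assertion, suppose $g$ is linearized. Then $g(0)=0$ forces $R(0)=0$. From $(x+y)^{q^n}-(x+y)=(x^{q^n}-x)+(y^{q^n}-y)$ together with the additivity of $g$, one gets $R(u+v)=R(u)+R(v)$ where $u=x^{q^n}-x$ and $v=y^{q^n}-y$. Since $u$ and $v$ involve disjoint variables and each is transcendental, the substitution $s\mapsto x^{q^n}-x,\ t\mapsto y^{q^n}-y$ embeds $\F_q[s,t]$ into $\F_q[x,y]$; this upgrades the relation to the genuine polynomial identity $R(s+t)=R(s)+R(t)$. In the same way, $g(cx)=cg(x)$ for $c\in\F_q$ (using $c^{q^n}=c$) transfers to $R(cs)=cR(s)$. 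As the additive, $\F_q$-homogeneous polynomials are exactly the $q$-linearized ones, $R$ is linearized.

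The main obstacle is the middle step of the first paragraph: isolating each individual base-$h$ digit as shift-invariant. Everything turns on the uniqueness of the base-$h$ expansion combined with the degree bookkeeping that keeps each difference $c_i(x+a)-c_i(x)$ strictly below degree $q^n$; without that control one could not legitimately equate coefficients of the powers $h(x)^i$. Once this is secured, the root-counting collapse to constants and the transfer of additivity and homogeneity for the linearized case are routine.
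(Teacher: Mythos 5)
Your proof is correct, but it takes a genuinely different route from the paper's in both halves. For the existence of $R$, the paper inducts on $\deg g$: since $g(x)-g(0)$ vanishes on $\F_{q^n}$ it is divisible by $x^{q^n}-x$, the quotient inherits the shift-invariance with smaller degree, and the induction assembles $R$ one ``digit'' at a time; your $h$-adic expansion $g=\sum_i c_i(x)h(x)^i$ with $h(x)=x^{q^n}-x$ carries out the same divisibility idea all at once, with uniqueness of the base-$h$ digits replacing the inductive step and the same root-counting argument showing each digit is constant --- a repackaging, but a clean and valid one. The real divergence is in the linearized case. The paper argues semantically: it uses surjectivity of $z\mapsto z^{q^n}-z$ on $\overline{\F_{q^n}}$ to show that $R$ induces an $\F_q$-linear map of the algebraic closure, then pins $R$ down by comparing it with the linearized polynomial representing that map on a field $\F_{q^s}$ with $q^s>\deg R$. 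You argue syntactically: the algebraic independence of $x^{q^n}-x$ and $y^{q^n}-y$ (disjoint variables, each transcendental) upgrades the functional equations to the genuine polynomial identities $R(s+t)=R(s)+R(t)$ and $R(cs)=cR(s)$, and you then invoke the classical fact that additive polynomials in characteristic $p$ are exactly the $p$-polynomials, with $\F_q$-homogeneity cutting these down to $q$-polynomials. That last fact is the one ingredient you use without proof; it is standard (if $k$ is not a power of $p$, some $\binom{k}{j}$ with $0<j<k$ is nonzero mod $p$ by Lucas, killing the cross terms, and homogeneity then forces only exponents $p^i$ with $\F_q\subseteq\F_{p^i}$), so you should cite it or include the two-line argument. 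Your route avoids the algebraic closure and the surjectivity of the Artin--Schreier-type map entirely and works over any field containing $\F_q$; the paper's route stays self-contained within the linear-map/linearized-polynomial correspondence it set up at the start of Section 2.
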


\begin{proof}
For the first statement, we proceed by induction on $n=\deg g(x)$. If $g(x)$ is constant then there is nothing to prove. Suppose that the statement is true for all polynomials of degree at most $k$ and let $g(x)\in \F_{q}[x]$ a polynomial of degree $k+1$ satisfying $g(x+a)=g(x)$ for every $a\in \F_{q^n}$. We have $g(0)=g(a)$ for all $a\in \F_{q^n}$ and so the polynomial $g(x)-g(0)$ has degree $k+1>0$ and vanishes at $\F_{q^n}$. In particular we have that 
\begin{align}\label{um} g(x)-g(0)=(x^{q^n}-x) G(x)\end{align}
 for some non-zero polynomial $G(x)\in \F_{q}[x]$. Since $g(x+a)-g(0)=g(x)-g(0)$ and $(x+a)^{q^n}-(x+a)=x^{q^n}-x$ for every $a\in \F_{q^n}$, it follows from \eqref{um} that $G(x)=G(x+a)$  for every $a\in \F_{q^n}$ and $\deg G(x)< \deg g(x)$. By the induction hypothesis we have that $G(x)=F(x^{q^n}-x)$ for some $F(x)\in \F_{q}[x]$. Therefore $g(x)=(x^{q^n}-x)F(x^{q^n}-x)+g(0)$ and so $g(x)=R(x^{q^n}-x)$, where $R(x)=xF(x)+g(0)\in \F_{q}[x]$.

For the second statement, notice that if $g(x)$ is linearized, then the equality $g(x)=R(x^{q^n}-x)$ yields:
\begin{equation}\label{three} bR(z^{q^n}-z)=bg(z)=g(bz)=R((bz)^{q^n}-bz)=R(b(z^{q^n}-z))\end{equation}
\begin{equation}\label{four} R(z^{q^n}-z)+R(y^{q^n}-y)=g(z)+g(y)=g(z+y)=R(z^{q^n}-z+(y^{q^n}-y))\end{equation}
for any $b\in \F_{q}$ and $y, z\in \overline{\F_{q^n}}$, where $\overline{\F_{q^n}}$ denotes the algebraic closure of $\F_{q^n}$. In particular, for any $A, B\in \overline{\F_{q^n}}$ there exist $A_0$ and $B_0$ in $\overline{\F_{q^n}}$ such that $A_0^{q^n}-A_0=A$ and $B_0^{q^n}-B_0=B$ and then, from the equalities \eqref{three} and \eqref{four}, we conclude that 
$$R(A+B)=R(A)+R(B)\quad \text{and}\quad R(bA)=bR(A)$$ 
for any $b\in \F_{q}$ and $A, B\in \overline{\F_{q^n}}$. Hence $R(x)$ induces an $\F_{q}-$linear map $T$ from $\overline{\F_{q^n}}$ to itself. Let $r=\deg R(x)$ and $s$ large enough such that $q^s>r$. If $S(x)$ is the linearized polynomial representation of $T$ when restricted to $\F_{q^s}$, then $R(z)=S(z)$ for any $z\in \F_{q^s}$ and $\deg R, \deg S<q^s$. Therefore $R(x)=S(x)$ and thus $R(x)$ is linearized.

\end{proof}

\noindent The main result of this section is the following:

\begin{theorem}\label{MAIN}
Let $L(x)=\sum_{i=0}^{n-1}a_ix^{q^i}\in \F_{q}[x]$ be a nonzero linearized polynomial and $l(x)=\sum_{i=0}^{n-1}a_ix^{i}$ its conventional $q-$associate. Then $L(x)$ is a $t-$NLP over $\F_{q^n}$ if, and only if, $x^n-1$ divides $l(x)^t$. In particular, if $n$ is not divisible by $p$ then for any $t\ge 2$, there are no $t-$NLP's over $\F_{q^n}$ with coefficients in $\F_q$.
\end{theorem}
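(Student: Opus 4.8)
The plan is to translate the compositional (symbolic) condition defining a $t$-NLP into a divisibility of conventional $q$-associates, where the arithmetic is ordinary polynomial multiplication. The starting point is Lemma \ref{lini}: since $L(x)\in\F_q[x]$, its $t$-fold composition $L^{(t)}(x)=L(x)\otimes\cdots\otimes L(x)$ again lies in $\F_q[x]$, and an easy induction on $t$ shows that its conventional $q$-associate is exactly $l(x)^t$. The whole theorem then reduces to the following dictionary, which I would isolate as the core claim: for a linearized $M(x)\in\F_q[x]$ with conventional $q$-associate $m(x)$, one has $M(x)\equiv 0\pmod{x^{q^n}-x}$ if and only if $x^n-1\mid m(x)$; applying this with $M=L^{(t)}$ and $m=l^t$ gives the statement. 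I would also record the elementary fact that the linearized $q$-associate of $x^n-1$ is precisely $x^{q^n}-x$, which links the two sides. Note that, since $L$ is nonzero of degree at most $q^{n-1}<q^n$, it is automatically $\not\equiv 0\pmod{x^{q^n}-x}$, so the remaining content of being a $t$-NLP is just the vanishing of $L^{(t)}$.

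For the easy direction of the dictionary, suppose $x^n-1\mid m(x)$, say $m(x)=(x^n-1)h(x)$. Writing $H$ for the linearized $q$-associate of $h$ and using Lemma \ref{lini} together with the fact above, I would conclude $M(x)=H(x)\otimes(x^{q^n}-x)=H(x^{q^n}-x)$. Evaluating at any $z\in\F_{q^n}$ gives $M(z)=H(z^{q^n}-z)=H(0)=0$, so $M\equiv 0\pmod{x^{q^n}-x}$, as desired.

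The converse is the heart of the argument and is where Proposition \ref{comp} does the real work. Assuming $M\equiv 0\pmod{x^{q^n}-x}$, i.e. $M$ vanishes identically on $\F_{q^n}$, I would first observe that $M(x+a)=M(x)+M(a)=M(x)$ for every $a\in\F_{q^n}$, because $M$ is linearized and vanishes on $\F_{q^n}$. Thus $M$ satisfies the hypotheses of Proposition \ref{comp}, which yields a linearized $R(x)\in\F_q[x]$ with $M(x)=R(x^{q^n}-x)=R(x)\otimes(x^{q^n}-x)$. Applying Lemma \ref{lini} once more, the conventional $q$-associate of the right-hand side is $r(x)\cdot(x^n-1)$, where $r$ is the $q$-associate of $R$. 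Since the conventional $q$-associate of $M$ is unique and equals $m(x)$, comparing gives $m(x)=r(x)(x^n-1)$, i.e. $x^n-1\mid m(x)$. The main obstacle is precisely this passage from a statement about $M$ as a function on $\F_{q^n}$ to an exact polynomial factorization; it is resolved cleanly by Proposition \ref{comp}, without any degree or interpolation bookkeeping.

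Finally, for the ``in particular'' clause, assume $p\nmid n$. Then the derivative of $x^n-1$ is $nx^{n-1}\neq 0$ and shares no root with $x^n-1$, so $x^n-1$ is squarefree over $\F_q$. A squarefree polynomial divides $l(x)^t$ if and only if it divides $l(x)$, since each of its distinct irreducible factors divides the power $l^t$ exactly when it divides $l$. Hence any putative $t$-NLP with coefficients in $\F_q$ would force $x^n-1\mid l(x)$; but $\deg l\le n-1<n$, so this forces $l=0$ and thus $L=0$, contradicting $L\neq 0$. Therefore no such $t$-NLP exists.
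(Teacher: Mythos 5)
Your proposal is correct and takes essentially the same route as the paper: Lemma \ref{lini} to identify the conventional $q$-associate of $L^{(t)}$ with $l(x)^t$, Proposition \ref{comp} to pass from vanishing of $M=L^{(t)}$ on $\F_{q^n}$ to the exact factorization $M(x)=R(x)\otimes(x^{q^n}-x)$, and squarefreeness of $x^n-1$ when $p\nmid n$ together with $\deg l\le n-1$ for the final clause. The only cosmetic differences are that you isolate the dictionary $M\equiv 0\pmod{x^{q^n}-x}$ if and only if $x^n-1\mid m(x)$ as a standalone claim, and you verify its easy direction by evaluating $H(z^{q^n}-z)=H(0)=0$ rather than arguing divisibility of $(x^{q^n}-x)\otimes G(x)$ as the paper does.
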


\begin{proof}
Suppose that $l(x)^t=(x^n-1)\cdot g(x)$ for some $g(x)\in \F_q[x]$ and $t\ge 2$. From Lemma \ref{lini}, we have that $$L^{(t)}(x)=\underbrace{L(x)\otimes \cdots\otimes L(x)}_{t}=(x^{q^n}-1)\otimes G(x),$$ where $G(x)$ is the linearized $q-$associate to $g(x)$. In particular $G(x)$ is linearized and then $(x^{q^n}-x) \otimes G(x)$ is divisible by $x^{q^n}-x$ in the ordinary sense. Therefore $L^{(t)}(x)\equiv 0\pmod {x^{q^n}-x}$ and, since $\deg L(x)<q^n$ and $L(x)$ is nonzero, we conclude that $L(x)\not\equiv 0\pmod {x^{q^n}-x}$. Thus $L(x)$ is a $t-$NLP over $\F_{q^n}$.

Conversely, suppose that $L(x)$ is a $t-$NLP over $\F_{q^n}$ and set $M(x)=L^{(t)}(x)$. Since $M(x)\in\F_q[x]$ is linearized and vanishes on $\F_{q^n}$, it follows that $M(x+a)=M(x)+M(a)=M(x)$ for any $a\in \F_{q^n}$. From Proposition \ref{comp} there exists a linearized polynomial $R(x)\in \F_{q}[x]$ such that $M(x)=R(x^{q^n}-x)$, i.e, $M(x)=R(x)\otimes (x^{q^n}-x)$. Therefore
\begin{equation}\label{two}L^{(t)}(x)=\underbrace{L(x)\otimes \cdots\otimes L(x)}_{t}= R(x)\otimes(x^{q^n}-x).\end{equation}
Since $R(x)\in \F_{q}[x]$, from Lemma \ref{lini} and equation \eqref{two} we conclude that $$l(x)^t=(x^n-1) r(x),$$ where $r(x)$ is the conventional $q-$associate of $R(x)$. Thus $l(x)^t$ is divisible by $x^n-1$. 

Suppose that $n$ is not divisible by $p$ and there exist $t\ge 2$ and $L(x)$ such that $L(x)\in \F_q[x]$ is a $t-$NLP over $\F_{q^n}$. In particular, if $L_0(x)=\sum_{i=0}^{n-1}b_ix^{i}$ is the reduction of $L(x)$ modulo $x^{q^n}-x$, then $L_0(x)\in \F_{q}[x]$ is also a $t-$NLP over $\F_{q^n}$ and so $l_0(x)=\sum_{i=0}^{n-1}b_ix^i$, the conventional $q-$associate of $L_0(x)$, is such that $l_0(x)^t$ is divisible by $x^n-1$. But if $n$ is not divisible by $p$, then $x^n-1$ has only simple roots and so we conclude that $l_0(x)$ is also divisible by $x^n-1$. Since $l_0(x)$ has degree at most $n-1$ it follows that $l_0(x)=0$, hence $L_0(x)=0$ and so $L(x)\equiv 0\pmod {x^{q^n}-x}$, a contradiction.
\end{proof}

Theorem \ref{MAIN} suggests a method for the construction of $t-$NLP's over $\F_{q^n}$ in the case when $n$ is divisible by $p$:

\begin{coro}\label{const}
Let $t\ge 2$ be an integer, $p=\Char(\F_{q^n})$ and $n=p^su$, where $\gcd(u, p)=1$ and $s\ge 1$. Let $r(x)\in \F_q[x]$ be any nonzero polynomial of degree at most $v=n-1-u\cdot \left \lceil \frac{p^s}{t}\right\rceil$ and 
$$l_{r, t}(x)=r(x)(x^{u}-1)^{\left\lceil\frac{p^s}{t}\right\rceil}.$$ 
Then $L_{r, t}(x)\in \F_{q}[x]$, the linearized $q-$associate of $l_{r, t}(x)$, is a $t-$NLP over $\F_{q^n}$.
\end{coro}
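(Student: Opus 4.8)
The plan is to read this off directly from Theorem \ref{MAIN}. Since $L_{r,t}(x)$ has all its coefficients in $\F_q$ and $l_{r,t}(x)$ is its conventional $q$-associate, the theorem tells us that $L_{r,t}(x)$ is a $t$-NLP over $\F_{q^n}$ precisely when $x^n-1$ divides $l_{r,t}(x)^t$. So the entire task reduces to verifying three things: that $l_{r,t}(x)$ has degree at most $n-1$ (so $L_{r,t}$ is a reduced linearized polynomial of the shape demanded by the theorem), that $L_{r,t}(x)\not\equiv 0\pmod{x^{q^n}-x}$, and that the divisibility $x^n-1\mid l_{r,t}(x)^t$ holds.

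First I would dispose of the degree and nonvanishing conditions. A direct computation gives $\deg l_{r,t}(x)=\deg r(x)+u\ceil{p^s/t}\le v+u\ceil{p^s/t}=n-1$, so $L_{r,t}(x)$ is a linearized polynomial of degree at most $q^{n-1}<q^n$; since $r(x)$ is nonzero, $l_{r,t}(x)$ and hence $L_{r,t}(x)$ are nonzero, and being of degree below $q^n$ this forces $L_{r,t}(x)\not\equiv 0\pmod{x^{q^n}-x}$. The crux is then the divisibility, and here I would exploit the characteristic. Because $n=p^su$, the Frobenius identity $(a-b)^{p^s}=a^{p^s}-b^{p^s}$ yields the factorization
\begin{equation*}
x^n-1=(x^u)^{p^s}-1^{p^s}=(x^u-1)^{p^s}.
\end{equation*}
On the other hand, $l_{r,t}(x)^t=r(x)^t\,(x^u-1)^{t\ceil{p^s/t}}$, and since $\ceil{p^s/t}\ge p^s/t$ we have $t\ceil{p^s/t}\ge p^s$. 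Thus $(x^u-1)^{p^s}$ divides $(x^u-1)^{t\ceil{p^s/t}}$ and therefore divides $l_{r,t}(x)^t$; combined with the factorization this gives $x^n-1\mid l_{r,t}(x)^t$. Theorem \ref{MAIN} then identifies $L_{r,t}(x)$ as a $t$-NLP over $\F_{q^n}$.

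There is no genuinely hard step: the corollary is an immediate application of Theorem \ref{MAIN}, and the only real insight is the choice of the exponent $\ceil{p^s/t}$, which is exactly the smallest integer making $t\ceil{p^s/t}\ge p^s$ while keeping $\deg l_{r,t}\le n-1$. The one point deserving a line of care is that the construction is non-vacuous, namely that $v\ge 0$ so a nonzero $r(x)$ of degree at most $v$ exists. This follows from $t\ge 2$ and $s\ge 1$ (the worst case $t=2$ reduces to checking $u\ceil{p^s/2}\le up^s-1$, which holds in both the odd-$p$ and $p=2$ cases), but it is not needed for the divisibility argument itself.
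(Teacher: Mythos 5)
Your proposal is correct and follows essentially the same route as the paper: the paper's proof is exactly ``a direct calculation shows that $l_{r,t}(x)^t$ is divisible by $x^n-1$ and $\deg l_{r,t}(x)\le n-1$; the result follows from Theorem \ref{MAIN}'', and you have simply written out that calculation in full, via $x^n-1=(x^u-1)^{p^s}$ and $t\left\lceil p^s/t\right\rceil\ge p^s$. Your extra check that $v\ge 0$ matches the remark the paper makes immediately after the corollary, so nothing is missing or different in substance.
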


\begin{proof}
A direct calculation shows that $l_r(x)^t$ is divisible by $x^n-1$ and $\deg l_r(x)<n-1$. The result follows from Theorem \ref{MAIN}.
\end{proof}

A simple investigation shows that $v=n-1-u\cdot \left \lceil \frac{p^s}{t}\right\rceil\ge 0$ if $n=p^su$ and $t\ge 2$. The following example is a particular case of Corollary \ref{const} when $r(x)=1$:
\begin{exam}
Let $p=\Char(\F_{q^n})$, $\alpha\in\F_q^*$ and $n=p^su$, where $\gcd(u, p)=1$ and $s\ge 1$. The polynomial $$L_{1, t}(x)=\sum_{i=0}^{d_{p, t}}(-1)^{d_{p,t}-i}\binom{d_{p, t}}{i} x^{q^{ui}}$$ is a $t-$NLP over $\F_{q^n}$, where $d_{p, t}=\left\lceil \frac{p^s}{t}\right\rceil$.
\end{exam}

\section{Constructing permutations via $t-$NLP's}
In this section we present a method for constructing permutation polynomials over $\F_{q^n}$ which are the sum of two polynomials, one of them being a $t-$NLP. A polynomial $f(x)\in \F_{q^n}[x]$ is said to be a permutation polynomial over $\F_{q^n}$ if the map $c\mapsto f(c)$ induced by $f(x)$ is a permutation from $\F_{q^n}$ to itself. We say that a permutation polynomial $f(x)$ is a complete permutation polynomial if $f(x)+x$ is also a permutation polynomial. The set $G(q^n)$ of the permutation polynomials over $\F_{q^n}$ is a group under the polynomial compostiton modulo $x^{q^n}-x$, and this group is isomorphic to the symmetric group $S_{q^n}$. The identity element of $(G(q^n), \circ)$ is the identity map $g(x)=x$ and, for each $f\in G(q^n)$, $\Ord(f)$ denotes the order of $f$ in the group $(G(q^n), \circ)$, i.e, $\Ord(f)=\min\{d>0 | f^{(d)}(z)=z, \forall z\in \F_{q^n}\}$.

The following theorem gives an interesting relation between the $t-$NLP's and some permutation and complete permutation polynomials:

\begin{theorem}\label{PP}
Let $p=\Char(\F_{q^n})$. Let $L(x)$ be a $t-$NLP over $\F_{q^n}$ and $k(x)$ be any linearized permutation polynomial over $\F_{q^n}$ such that, under the ordinary polynomial composition, $k$ commutes with $L$, i.e., $$k\circ L(x)=L\circ k(x).$$ If $s=\Ord(k)$, then

\begin{enumerate}[a)]
\item $L(x)+k(x)$ is also a permutation polynomial over $\F_{q^n}$ and its compositional inverse map is given by $$(L+k)^{(-1)}(x)=\sum_{i=0}^{t-1}(-1)^iL^{(i)}(k^{(s-1-i)}(x)),$$
where $k^{(0)}(x)=L^{(0)}(x)=x$ and $(s-1-i)$ is taken modulo $s$.

\item if $k(x)$ is a complete permutation polynomial over $\F_{q^n}$, then so is $L(x)+k(x)$.

\item $\Ord(L+k)$ divides $\Lcm(s, p^e)$, where $e=\lceil \log_{p} t\rceil$.

\item if $t=2$ and $\Gcd(s, p)=1$, then $\Ord(L+k)=ps$.
\end{enumerate}
\end{theorem}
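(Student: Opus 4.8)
The plan is to view $L$ and $k$ as $\F_q$-linear operators on $\F_{q^n}$ and to work inside the commutative ring $\mathcal A$ of $\F_q$-linear endomorphisms of $\F_{q^n}$ that they generate (the hypothesis $k\circ L=L\circ k$ guarantees commutativity, and since $p\cdot\mathrm{id}=0$ this ring has characteristic $p$). Inside $\mathcal A$ the three structural facts I will repeatedly use are $L^{(t)}=0$ (nilpotency of the NLP), $k^{(s)}=\mathrm{id}$, and that $k$ is invertible with $k^{(s-1)}=k^{(-1)}$. For part a), I would simply verify that the proposed map $M(x)=\sum_{i=0}^{t-1}(-1)^iL^{(i)}(k^{(s-1-i)}(x))$ is a two-sided inverse of $L+k$ by composing: writing compositions multiplicatively in $\mathcal A$, $(L+k)\circ M=\sum_{i=0}^{t-1}(-1)^iL^{(i+1)}k^{(s-1-i)}+\sum_{i=0}^{t-1}(-1)^iL^{(i)}k^{(s-i)}$, and after reindexing $i\mapsto i+1$ in the first sum the intermediate terms cancel pairwise, leaving $k^{(s)}+(-1)^{t-1}L^{(t)}k^{(s-t)}=\mathrm{id}+0=\mathrm{id}$. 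This both exhibits the inverse and proves $L+k$ is a permutation.

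Part b) I would deduce formally from a). The identity map $I(x)=x$ is linearized and commutes with everything, so if $k$ is a complete permutation then $k+I$ is itself a linearized permutation polynomial commuting with $L$; applying a) to the pair $(L,k+I)$ shows $L+(k+I)=(L+k)+I$ is a permutation, i.e.\ $(L+k)(x)+x$ permutes $\F_{q^n}$, which is exactly the assertion that $L+k$ is complete.

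For part c) the engine is the Frobenius identity in characteristic $p$. Since $e=\ceil{\log_p t}$ gives $p^e\ge t$, nilpotency yields $L^{(p^e)}=0$, and commutativity together with $\Char\mathcal A=p$ gives $(L+k)^{(p^e)}=L^{(p^e)}+k^{(p^e)}=k^{(p^e)}$. Writing $m=\Lcm(s,p^e)$, which is divisible by $p^e$, I would raise this to the power $m/p^e$ to obtain $(L+k)^{(m)}=k^{(m)}=\mathrm{id}$, the last equality because $s\mid m$; hence $\Ord(L+k)\mid\Lcm(s,p^e)$.

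Part d) is where the real work lies, and I expect it to be the main obstacle. Here $t=2$ forces $e=1$, so by c) $d:=\Ord(L+k)$ divides $ps$, and it remains to prove equality. Using $L^{(2)}=0$, only the $i=0,1$ terms of the binomial expansion survive, so $(L+k)^{(d)}=k^{(d)}+d\,L\circ k^{(d-1)}$. Setting this equal to $\mathrm{id}$ and isolating, $k^{(d)}-\mathrm{id}=-d\,L\circ k^{(d-1)}$, whose square is zero because $L^{(2)}=0$; thus $(k^{(d)}-\mathrm{id})^2=0$. The key point is that $\Gcd(s,p)=1$ makes $x^s-1$ separable, so $k$ (whose minimal polynomial divides $x^s-1$) is semisimple; then $k^{(d)}-\mathrm{id}$ is simultaneously semisimple and nilpotent, hence $0$, giving $k^{(d)}=\mathrm{id}$ and therefore $s\mid d$. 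Feeding $k^{(d)}=\mathrm{id}$ back into $d\,L\circ k^{(d-1)}=0$ and noting $L\circ k^{(d-1)}\ne 0$ (as $k$ is invertible and $L\ne 0$) forces the scalar $d$ to vanish in $\F_p$, i.e.\ $p\mid d$. Since $\Gcd(s,p)=1$ this yields $ps\mid d$, and combined with $d\mid ps$ from c) we conclude $\Ord(L+k)=ps$. The one step I would be most careful about is justifying the semisimplicity of $k$ and the ``semisimple plus nilpotent equals zero'' dichotomy, which is precisely where the coprimality hypothesis is essential.
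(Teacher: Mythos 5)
Your proofs of parts a)--c) coincide with the paper's: the same telescoping verification of the inverse (checking one composition suffices on a finite set, and the paper likewise checks only one side), the same reduction of b) to a) via the commuting pair $(L,\,k(x)+x)$, and the same Frobenius computation $(L+k)^{(p^e)}=L^{(p^e)}+k^{(p^e)}=k^{(p^e)}$ raised to the power $\Lcm(s,p^e)/p^e$ for c). Part d) is where you genuinely diverge, and your route is correct. The paper stays pointwise and elementary: assuming $p\nmid v$ for $v=\Ord(L+k)$, it gets $v\mid s$, evaluates the binomial identity at exponent $s$ to obtain $sL(k^{(s-1)}(z))=0$ for all $z\in\F_{q^n}$, and derives the contradiction $L\equiv 0$; then, writing $v=ps_0$, it applies Frobenius once more to get $k^{(ps_0)}=\mathrm{id}$, hence $s\mid v$, and concludes $v=ps$. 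You instead work in the commutative operator ring: from $k^{(d)}-\mathrm{id}=-d\,L\circ k^{(d-1)}$ you deduce $(k^{(d)}-\mathrm{id})^2=0$, and separability of $x^s-1$ --- the precise point where $\Gcd(s,p)=1$ enters --- forces the nilpotent, semisimple operator $k^{(d)}-\mathrm{id}$ to vanish, giving $s\mid d$ directly and without a case split; feeding $k^{(d)}=\mathrm{id}$ back in, $d\,L\circ k^{(d-1)}=0$ with $L\circ k^{(d-1)}\ne 0$ (since $k^{(d-1)}$ is bijective and the map induced by $L$ is nonzero by the NLP definition) yields $p\mid d$, whence $ps\mid d$ and equality. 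The semisimplicity step you flagged as delicate is indeed sound here, since $\F_q$ is perfect; it can even be made entirely elementary without that language: the minimal polynomial of $k^{(d)}$ divides $\gcd\bigl(x^s-1,(x-1)^2\bigr)=x-1$ because $1$ is a simple root of the separable polynomial $x^s-1$, so $k^{(d)}=\mathrm{id}$. The trade-off is clear: the paper's argument uses nothing beyond evaluation at points and the Frobenius identity, while yours imports minimal-polynomial machinery but isolates the role of the coprimality hypothesis more transparently and replaces the paper's proof by contradiction with a direct structural computation.
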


\begin{proof}
In the proof of this result and many others in this section we use the following identity:
$$(L+k)^{(p^l)}(z)=L^{(p^l)}(z)+k^{(p^l)}(z)$$
for any $z\in \F_{q^n}$ and $l\in \N$, which is the Frobenius identity in the case when $L(x)$ and $k(x)$ are commuting linearized polynomials over $\F_{q^n}$.

a) Notice that: $$(L+k)\circ \left[\sum_{i=0}^{t-1}(-1)^iL^{(i)}(k^{(s-1-i)}(z))\right]=\sum_{i=0}^{t-1}(-1)^iL^{(i+1)}(k^{(s-1-i)}(z))+\sum_{i=0}^{t-1}(-1)^iL^{(i)}(k^{(s-i)}(z))=$$
$$=(-1)^{t-1}L^{(t)}(z)+k^{(s)}(z)=0+z=z,$$
for all $z\in \F_{q^n}$. In particular $L+k$ is a permutation over $\F_{q^n}$ and its inverse map is given by $\sum_{i=0}^{t-1}(-1)^iL^{(i)}(k^{(s-1-i)}(x))$.

b) If $k(x)$ is a complete permutation polynomial, then $K(x)=k(x)+x$ is a permutation polynomial and item (a) shows that $L(x)+k(x)$ is a permutation polynomial. A direct calculation shows that $K\circ L(x)=L\circ K(x)$ and then (a) shows thatr $K(x)+L(x)=(L(x)+k(x))+x$ is a permutation polynomial. Thus $L(x)+k(x)$ is a complete permutation polynomial.

c) Let $v=\Ord(L+k)$ and $u=\Lcm(s, p^e)$, where $e=\lceil \log_pt\rceil$ satisfies $p^e\ge t$. 
In particular, $L^{(p^e)}(z)=0$ for any $z\in\F_{q^n}$.
Since $p=\Char(\F_{q^n})$ and $u$ is divisible by $p^e$ and $s$, then the following equality holds for any $z\in \F_{q^n}$: $$(L+k)^{(u)}(z)=((L+k)^{(p^e)})^{(u/p^e)}(z)=(L^{(p^e)}+k^{(p^e)})^{(u/p^e)}(z)=k^{(u)}(z)=z.$$
Thus $\Ord(L+k)=v$ divides $u$.  

d) Suppose that $t=2$ and $\Gcd(s, p)=1$. In particular $e=\lceil \log_p t\rceil=1$ and item (c) shows that $v=\Ord(L+k)$ divides $u=\Lcm(s, p)=ps$. Since $L^{(2)}(z)=0$, for any $z\in \F_{q^n}$ and $d\in \N$ we have the following equality:
$$(L+k)^{(d)}(z)=dL(k^{(d-1)}(z))+k^{(d)}(z),$$
which is the version of the Binomial Theorem in the in the case when $L(x)$ an $k(x)$ are commuting linearized polynomials over $\F_{q^n}$ and $L^{(2)}(z)=0$ for any $z\in \F_{q^n}$. If $v=\Ord(L+k)$ is not divisible by $p$, then $v$ divides $s$. Therefore $$z=(L+k)^{(s)}(z)= sL(k^{(s-1)}(z))+k^{(s)}(z)=sL(k^{(s-1)}(z))+z$$ or, equivalently, $sL(k^{(s-1)}(z))=0$ for all $z\in \F_{q^n}$. Since $k^{(s-1)}(z)$ is a permutation polynomial over $\F_{q^n}$ and $s$ is not divisible by $p$, it follows that $L(z)=0$ for any $z\in \F_{q^n}$, a contradiction with $L(x)\not\equiv 0\pmod {x^{q^n}-x}$ . Thus $p$ divides $v$ and so there exists some divisor $s_0$ of $s$ such that $v=ps_0$. Therefore, for any $z\in \F_{q^n}$ we have:
$$z=(L+k)^{(ps_0)}(z)=(L^{(p)}+k^{(p)})^{(s_0)}(z)=k^{(ps_0)}(z).$$
Since the equality above holds for all $z\in \F_{q^n}$, it follows that $s=\Ord(k)$ divides $ps_0=v$. Thus $v$ is divisible by $u=\Lcm(s, p)=ps$ and, since $v$ divides $u=ps$ we conclude that $v=u=ps$.
\end{proof}
In a particular case when $k(x)=\gamma x$, where $\gamma\in \F_{q}^*$ we have the following:
\begin{coro}\label{linear}
Let $L(x)$ be a $t-$NLP over $\F_{q^n}$, $p=\Char(\F_{q^n})$ and $\gamma$ be any element of order $s=\ord\gamma$ in the multiplicative group $\F_q^{*}$. Then
\begin{enumerate}[a)]
\item $L(x)+\gamma x$ is a permutation polynomial over $\F_{q^n}$ and its compositional inverse map is given by $$\sum_{i=0}^{t-1}\gamma^{s-1-i}(-1)^iL^{(i)}(x).$$
\item if $\gamma\ne -1$, then $L(x)+\gamma x$ is also a complete permutation polynomial.
\item $\Ord(L+\gamma x)$ divides $p^e\cdot s$, where $e=\lceil \log_pt\rceil$. Also, if $t=2$, then $\Ord(L+\gamma x)=ps$.
\end{enumerate} 
\end{coro}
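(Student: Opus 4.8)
The plan is to recognize this corollary as the special case of Theorem \ref{PP} obtained by taking $k(x) = \gamma x$, and then to simplify each resulting expression. First I would verify that $k(x) = \gamma x$ satisfies the hypotheses of Theorem \ref{PP}. Since $\gamma \in \F_q^*$, the polynomial $\gamma x = \gamma x^{q^0}$ is linearized and induces a bijection of $\F_{q^n}$, so it is a linearized permutation polynomial. Because $L(x)$ is $\F_q$-linear and $\gamma \in \F_q$, we have $L(\gamma x) = \gamma L(x) = k(L(x))$, so $k$ commutes with $L$ under ordinary composition. Finally, $k^{(j)}(x) = \gamma^j x$ for all $j$, so $\Ord(k)$ equals the multiplicative order of $\gamma$, namely $s = \ord\gamma$. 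Thus Theorem \ref{PP} applies with this choice of $k$.

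For part (a), I would substitute $k^{(s-1-i)}(x) = \gamma^{s-1-i} x$ into the inverse formula of Theorem \ref{PP}(a). Using that each iterate $L^{(i)}$ is $\F_q$-linear and $\gamma^{s-1-i} \in \F_q$, the term $L^{(i)}(k^{(s-1-i)}(x))$ simplifies to $\gamma^{s-1-i} L^{(i)}(x)$, giving exactly the claimed inverse $\sum_{i=0}^{t-1} \gamma^{s-1-i}(-1)^i L^{(i)}(x)$. For part (b), I would observe that $k(x) + x = (\gamma + 1)x$ is a permutation polynomial precisely when $\gamma \ne -1$; in that case $k$ is a complete permutation polynomial, so Theorem \ref{PP}(b) immediately yields that $L(x) + \gamma x$ is complete.

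For part (c), the divisibility $\Ord(L + \gamma x) \mid p^e s$ follows from Theorem \ref{PP}(c), since $\Lcm(s, p^e)$ always divides the product $p^e s$. The one point needing a small extra remark is the sharp statement for $t = 2$: here Theorem \ref{PP}(d) gives $\Ord(L + \gamma x) = ps$ provided $\Gcd(s, p) = 1$, and I would note that this coprimality is automatic. Indeed $\gamma$ lies in $\F_q^*$, a group of order $q - 1$, so $s = \ord\gamma$ divides $q - 1$; since $q$ is a power of $p$ we have $p \nmid q - 1$, whence $\Gcd(s, p) = 1$. This is really the only nontrivial verification in the argument, and it is the step I would be most careful about, since overlooking it would leave the hypothesis of Theorem \ref{PP}(d) unchecked; everything else is a direct transcription of Theorem \ref{PP} with $k(x) = \gamma x$.
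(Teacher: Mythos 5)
Your proposal is correct and follows essentially the same route as the paper: the paper likewise specializes Theorem \ref{PP} to $k(x)=\gamma x$, checking that $\gamma x$ is a linearized permutation polynomial commuting with $L$ with $\Ord(\gamma x)=\ord\gamma=s$, and observing that $s\mid q-1$ forces $\Gcd(s,p)=1$ (hence $\Lcm(s,p^e)=p^e s$), which is exactly the coprimality point you flagged as the only nontrivial verification.
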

\begin{proof}
Since $\gamma \in \F_q^{*}$, it follows that $\gamma x$ is a permutation polynomial over $\F_{q^n}$, commutes with $L(x)$ and satisfies $\Ord(\gamma x)=\ord \gamma=s$. Also if $\gamma \ne -1$, $\gamma x$ is a complete permutation polynomial. Finally, since $\ord \gamma=s$ divides $q-1$, we have that $\Gcd(s, p)=1$ and $\Lcm (s, p^d)= p^d\cdot s$ for any $d\in \N$. The results now follow directly from Theorem \ref{PP}.
\end{proof}
\begin{exam}
Let $n=2m$, $\alpha$ and $\beta$ be elements of $\F_{q^n}^{*}$ such that $\alpha^{q^m}+\alpha=0$ and $\beta^{q^m+1}=1$ and $\gamma$ be any element of $\F_q^{*}$. From Example \ref{half} and Corollary \ref{linear}, the polynomials $$L_{\alpha, \beta, \gamma}(x)=(\alpha\beta x^{q^m}+\alpha x)+\gamma x$$ are permutation polynomials over $\F_{q^n}$, $\Ord(L_{\alpha, \beta, \gamma})=p \cdot \ord \gamma$ and the compositional inverse map of $L_{\alpha, \beta, \gamma}(x)$ is given by:
$$\gamma^{-1}x-\gamma^{-2}(\alpha\beta x^{q^m}+\alpha x).$$
\end{exam}

From Corollary \ref{const}, we can construct a large class of permutations:

\begin{coro}\label{general}
Let $t\ge 2$ be an integer, $p=\Char(\F_{q^n})$ and $n=p^su$, where $\gcd(u, p)=1$ and $s\ge 1$. Let $r(x)\in \F_q[x]$ be any nonzero polynomial of degree at most $v=n-1-u\cdot \left \lceil \frac{p^s}{t}\right\rceil$ and 
$$l_r(x)=r(x)(x^{u}-1)^{\left\lceil\frac{p^s}{t}\right\rceil}.$$ 
Also, let $L_r(x)\in \F_{q}[x]$ be the linearized $q-$associate of $l_r(x)$ and $\alpha, \beta$ be elements of  $\F_{q}^*$. Then the polynomials
$$L_{r, \alpha, \beta}(x)=L_r(x)+\alpha \Tr(x)+\beta x$$
are permutation polynomials over $\F_{q^n}$, where $\Tr(x)=\sum_{i=0}^n x^{q^i}$ denotes the absolute trace. Moreover if $\beta\ne -1$, then $L_{r, \alpha, \beta}(x)$ is a complete permutation polynomial over $\F_{q^n}$.
\end{coro}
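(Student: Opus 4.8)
The plan is to realize $L_{r,\alpha,\beta}(x)$ as a sum $L_r(x)+K(x)$ of a $t$-NLP and a commuting linearized permutation polynomial, so that Theorem \ref{PP} applies directly. First I would invoke Corollary \ref{const}: with $l_r(x)=r(x)(x^u-1)^{\lceil p^s/t\rceil}$ and $L_r(x)$ its linearized $q$-associate, the hypothesis $\deg r\le v=n-1-u\lceil p^s/t\rceil$ is exactly the one required there, so $L_r(x)\in\F_q[x]$ is a $t$-NLP over $\F_{q^n}$ (in particular it is nonzero modulo $x^{q^n}-x$). Set $K(x)=\alpha\Tr(x)+\beta x$. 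Since $L_r$, $\Tr$ and $x$ all lie in $\F_q[x]$ and are linearized, and $\F_q$-linearized polynomials commute under the symbolic product, $K$ commutes with $L_r$ under ordinary composition.

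The crux is to show that $K(x)=\alpha\Tr(x)+\beta x$ is a linearized permutation polynomial over $\F_{q^n}$, and this is where the hypothesis $p\mid n$ enters. For $w\in\F_q$ the absolute trace is $\Tr(w)=nw$, which vanishes because $p\mid n$. Now suppose $K(z_1)=K(z_2)$ and put $w=z_1-z_2$; by $\F_q$-linearity $\beta w+\alpha\Tr(w)=0$, and since $\Tr(w)\in\F_q$ and $\beta\ne 0$ this forces $w=-\alpha\beta^{-1}\Tr(w)\in\F_q$, whence $\Tr(w)=nw=0$ and therefore $w=0$. Thus $K$ is injective, hence a permutation of $\F_{q^n}$. (Equivalently, one checks that the conventional $q$-associate $k(x)=\beta+\alpha(1+x+\cdots+x^{n-1})$ is coprime to $x^n-1=(x^u-1)^{p^s}$, since $k(\zeta)=\beta\ne 0$ at every $u$-th root of unity $\zeta$.) With $L_r$ a $t$-NLP and $K$ a commuting linearized permutation polynomial, Theorem \ref{PP}(a) yields that $L_{r,\alpha,\beta}(x)=L_r(x)+K(x)$ is a permutation polynomial over $\F_{q^n}$.

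For the complete permutation statement, by Theorem \ref{PP}(b) it suffices to prove that $K(x)$ is itself a complete permutation polynomial whenever $\beta\ne -1$, that is, that $K(x)+x=\alpha\Tr(x)+(\beta+1)x$ is again a permutation. Since $\beta\in\F_q^*$ and $\beta\ne -1$, we have $\beta+1\in\F_q^*$, so the injectivity argument above applies verbatim with $\beta$ replaced by $\beta+1$. Hence $K$ is a complete permutation polynomial, and Theorem \ref{PP}(b) gives that $L_{r,\alpha,\beta}(x)$ is a complete permutation polynomial over $\F_{q^n}$.

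I expect the only genuine obstacle to be the permutation property of $K$; everything else (the NLP property of $L_r$, the commutativity, and the passage from $K$ to $L_r+K$) is immediate from the already-established results. The essential input is the assumption $p\mid n$, which makes $\Tr$ vanish on $\F_q$ and thereby prevents the trace term from destroying injectivity; without it the coefficient of $x$ after reduction would be $\alpha n+\beta$-type and the argument would fail.
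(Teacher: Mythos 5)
Your proposal is correct and follows essentially the same route as the paper: Corollary \ref{const} gives that $L_r(x)$ is a $t$-NLP, commutativity follows because all the polynomials involved lie in $\F_q[x]$, and Theorem \ref{PP} (parts a and b) finishes the argument. The one point of divergence is the sub-step that $k(x)=\alpha\Tr(x)+\beta x$ is a permutation (and a complete permutation when $\beta\ne -1$): the paper observes that $\alpha\Tr(x)$ is itself a $2$-NLP when $p\mid n$ and simply cites Corollary \ref{linear}, whereas you prove injectivity directly via $\Tr(w)=nw=0$ for $w\in\F_q$ (and your parenthetical $\gcd$ check of the $q$-associate against $x^n-1=(x^u-1)^{p^s}$ is also valid) --- a correct, slightly more self-contained verification of the same fact.
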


\begin{proof}
 Since $n$ is divisible by $p=\Char(\F_{q^n})$ and $\alpha\in\F_{q}^*$, a direct calculation shows that the polynomial $\alpha \Tr(x)$ is a $2-$NLP over $\F_{q^n}$. From Corollary \ref{linear}, $\alpha\Tr(x)+\beta x$ is a permutation over $\F_{q^n}$ and $\alpha\Tr(x)+\beta x$ is also a complete permutation polynomial in the case when $\beta\ne -1$. From Corollary \ref{const}, $L_r(x)$ is a $t-$NLP over $\F_{q^n}$. But $L_r(x)$ and $\alpha \Tr(x)+\beta x$ belong to $\F_q[x]$ and so these polynomials commute with each other. Now we apply Theorem \ref{PP} to $L(x)=L_r(x)$ and $k(x)=\alpha \Tr(x)+\beta x$.
\end{proof}

In the notation of Corollary \ref{general}, we give explicit examples of permutation polynomials over $\F_{2^6}$ and $\F_{3^{3}}$ of the type $	L_{r, \alpha, \beta}(x)$:
\\

\def\arraystretch{1.2}
\begin{tabular}{|c|c|} 
\hline
$q=t=2$, $n=6$ & $\alpha=\beta=1$
\\

$r(x)$ & $L_{r, 1, 1}(x)$ \\
\hline

$1$ & $x^{32}+x^{16}+x^4+x^2+x$ \\

$x$ & $x^{32}+x^8+x^4$ \\

$x+1$ & $x^{32}+x^4+x$ \\

$x^2$ & $x^{16}+x^8+x^2$ \\

$x^2+1$ & $x^{16}+x^2+x$ \\

$x^2+x$ & $x^8$\\

$x^2+x+1$ & $x$\\
\hline
\end{tabular}
\quad
\begin{tabular}{|c|c|} 
\hline
$q=n=t=3$ & $\alpha=1, \beta=-1$
\\
$r(x)$ & $L_{r, 1, -1}(x)$ \\
\hline

$1$ & $x^9-x^3-x$ \\

$-1$ & $x^9+x$ \\

$x$ & $-x^9$ \\

$-x$ & $-x^3$ \\

$x+1$ & $-x^9+x^3-x$ \\

$x-1$ & $-x^9-x^3+x$\\

$-x+1$ & $x^3+x$\\

$-x-1$ & $-x$\\
\hline
\end{tabular}

\subsection{Cycle Structure}
If $F$ is any function from a finite set $S$ to itself, we can associate to it a directed graph $G(F, S)$ with vertex set $S$ and edge set $\{(x, F(x))\}_{x\in S}$. We say that $G(F, S)$ is the \textit{functional graph} associated to $F$. If $f(x)$ is a permutation polynomial over $\F_{q^n}$, it can be verified that the graph $G_f := G(f, \F_{q^n})$ is decomposed into disjoint cycles. Moreover, $\Ord(f)$ is the least common multiple of the cycle lengths of $G_f$ and the vertex of $G_f$ associated to $a\in \F_{q^n}$ belongs to a cycle of length $d$ if, and only if, $d$ is the least positive integer such that $f^{(d)}(a)=a$.

If $L(x)$ and $k(x)$ are linearized polynomials over $\F_{q^n}$ as in Theorem \ref{PP}, we know that $L(x)+k(x)$ is a permutation polynomial over $\F_{q^n}$. What is the relation between the functional graphs $G_k$ and $G_{L+k}$?

In the case when $L(x)$ is a $2-$NLP over $\F_{q^n}$, the following theorem shows that the cycle lengths of $G_{L+k}$ cannot be much larger than the ones of $G_{k}$ and, imposing an additional condition on $\Ord(k)$, we can completely describe the cycle structure of $G_{L+k}$ from $G_k$.

\begin{theorem}\label{cycle}
Let be a $2-$NLP over $\F_{q^n}$ and let $k(x)$ be any linearized permutation polynomial over $\F_{q^n}$ such that, under the ordinary polynomial composition, $k(x)$ commutes with $L(x)$, i.e., $k\circ L(x)=L\circ k(x)$. Set $s=\Ord(k)$ and $p=\Char(\F_{q^n})$. Suppose that the vertex associated to an element $a\in \F_{q^n}$ belongs to cycles of lengths $m_{a}$ and $m_{a}'$ in $G_k$ and $G_{L+k}$, respectively. Then the following holds:
\begin{enumerate}[a)]
\item $m_{a}'$ divides $\Lcm(m_{a}, p)$ and, if $L(a)=0$, then $m_{a}=m_{a}'$.
\item if $\Gcd(s, p)=1$ then $m_{a}'=\begin{cases}m_{a} \quad \text{if} & L(a)=0 \\ pm_{a}  & \text{otherwise.}\end{cases}$
\end{enumerate}
\end{theorem}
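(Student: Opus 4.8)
\section*{Proof proposal}

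The plan is to reduce everything to the additive ``binomial'' identity used in the proof of Theorem \ref{PP}(d): since $t=2$ and $L$, $k$ are commuting linearized polynomials with $L^{(2)}(z)=0$ on $\F_{q^n}$, one has
$$(L+k)^{(d)}(z)=d\,L\bigl(k^{(d-1)}(z)\bigr)+k^{(d)}(z)$$
for every $d\in\N$ and $z\in\F_{q^n}$, where the integer coefficient $d$ is read modulo $p=\Char(\F_{q^n})$. Two structural facts will be used repeatedly. First, writing $c=L(a)$, commutativity gives $L(k^{(j)}(a))=k^{(j)}(L(a))=k^{(j)}(c)$, and $L(c)=L^{(2)}(a)=0$, so $c$ lies in the kernel $Z_L$. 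Second, since $k$ is a permutation fixing $0$, we have $k^{(j)}(c)=0$ if and only if $c=0$; thus $c\neq0$ forces $k^{(j)}(c)\neq0$ for all $j$.

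For part (a), to obtain $m_a'\mid\Lcm(m_a,p)$ I would evaluate the identity at $d=\Lcm(m_a,p)$: here $p\mid d$ kills the $L$-term (as $d\equiv0\pmod p$), while $m_a\mid d$ gives $k^{(d)}(a)=a$, so $(L+k)^{(d)}(a)=a$ and hence $m_a'\mid d$. For the refinement when $L(a)=0$, i.e.\ $c=0$, the first structural fact makes the $L$-term vanish identically, so the identity collapses to $(L+k)^{(d)}(a)=k^{(d)}(a)$ for all $d$; the $k$- and $(L+k)$-orbits of $a$ then coincide, giving $m_a=m_a'$.

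For part (b), assume $\Gcd(s,p)=1$. Since every cycle length divides $\Ord(k)=s$ we have $m_a\mid s$, whence $\Gcd(m_a,p)=1$ and $\Lcm(m_a,p)=pm_a$; combined with (a) this yields $m_a'\mid pm_a$. When $L(a)=0$ part (a) already gives $m_a'=m_a$, so suppose $c=L(a)\neq0$. The key computation is $(L+k)^{(m_a)}(a)=m_a\,u+a$, where $u=k^{(m_a-1)}(c)$; by the structural facts $u\neq0$, and $m_a\neq0$ in $\F_{q^n}$ because $p\nmid m_a$, so $(L+k)^{(m_a)}(a)\neq a$. I would then split on whether $p\mid m_a'$. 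If $p\mid m_a'$, the coefficient $m_a'$ vanishes modulo $p$, so the identity at $d=m_a'$ forces $k^{(m_a')}(a)=a$, i.e.\ $m_a\mid m_a'$; together with $p\mid m_a'$ and $\Gcd(m_a,p)=1$ this gives $pm_a\mid m_a'$, and with $m_a'\mid pm_a$ we conclude $m_a'=pm_a$. If instead $p\nmid m_a'$, then $m_a'\mid pm_a$ and coprimality to $p$ force $m_a'\mid m_a$; but then $m_a$ is a multiple of the $(L+k)$-order $m_a'$, so $(L+k)^{(m_a)}(a)=a$, contradicting the computation above. Hence $m_a'=pm_a$, as claimed.

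The only genuinely delicate point is ruling out the case $p\nmid m_a'$ in part (b); everything hinges on the non-vanishing $(L+k)^{(m_a)}(a)\neq a$, which is exactly where one needs both that the displacement term $m_a\,u$ survives in characteristic $p$ (using $p\nmid m_a$) and that $u=k^{(m_a-1)}(L(a))\neq0$ (using $L(a)\neq0$ together with $k$ being a bijection fixing $0$). The remainder is bookkeeping with the binomial identity.
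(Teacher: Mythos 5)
Your proposal is correct and follows essentially the same route as the paper: the same binomial identity $(L+k)^{(d)}(z)=d\,L(k^{(d-1)}(z))+k^{(d)}(z)$, the same contradiction from $m_a\,k^{(m_a-1)}(L(a))\neq 0$ when $p\nmid m_a$ and $L(a)\neq 0$, and the same divisibility bookkeeping to pin down $m_a'=pm_a$. The only cosmetic difference is that you apply the binomial identity uniformly (exploiting that the coefficient $d$ vanishes mod $p$) where the paper instead invokes $(L+k)^{(p)}=L^{(p)}+k^{(p)}$ with $L^{(p)}\equiv 0$; these are interchangeable.
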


\begin{proof}
a) Let $v=\Lcm(m_{a}, p)$. Notice that $(L+k)^{(v)}(z)=(L^{(p)}+k^{(p)})^{(v/p)}(z)=k^{(v)}(z)$ for any $z\in\F_{q^n}$. Since $m_{a}$ divides $v$, it follows that $k^{(v)}(a)=a$, hence $(L+k)^{(v)}(a)=a$ and so $m_{a}'$ divides $v$. If $L^{(2)}(z)=0$, we have seen that $$(L+k)^{(d)}(z)=dk^{(d-1)}(L(z))+k^{(d)}(z),$$ for any $d\in \N$ and $z\in\F_{q^n}$. Therefore, if $L(a)=0$ then $(L+k)^{(d)}(a)=k^{(d)}(a)$. Thus $m_{a}'=m_{a}$ if $L(a)=0$.

b) If $L(a)=0$, item (a) shows that $m_{a}=m_{a}'$. Suppose that $L(a)\ne 0$ and that $m_{a}'$ is not divisible by $p$. It follows from item (a) that $m_{a}'$ divides $m_{a}$ and then
$$a= (L+k)^{(m_{a})}(a)=m_{a}L(k^{(m_{a}-1)}(a))+k^{m_{a}}(a)=m_{a}k^{(m_{a}-1)}(L(a))+a,$$
hence $m_{a}k^{(m_{a}-1)}(L(a))=0$. Now, since $\Gcd(s, p)=1$ and $m_{a}$ divides $s$, it follows that $m_{a}$ is not divisible by $p$, hence $k^{(m_{a}-1)}(L(a))=0$. Notice that $k^{(m_{a}-1)}(x)$ is a linearized permutation polynomial and then maps the zero element to itself. Since $L(a)\ne 0$, it follows that the composition $k^{(m_{a}-1)}(L(a))$ is never zero and so we get a contradiction. Thus $p$ divides $m_{a}'$ and then there exists an integer $u$ such that $m_{a}'=pu$. Therefore
$$a=(L+k)^{(pu)}(a)=(L^{(p)}+k^{(p)})^{(u)}(a)=k^{(pu)}(a),$$
and then $m_{a}$ divides $pu$. Since $m_{a}$ is not divisible by $p$ it follows that $m_{a}$ divides $u$, hence $pm_{a}$ divides $pu=m_{a}'$. Item (a) shows that $m_{a}'$ divides $\Lcm(m_{a}, p)=pm_{a}$ and thus $m_{a}'=pm_{a}$.
\end{proof}

In the case when $k(x)=\gamma x$ for some $\gamma \in \F_{q}^*$, we can determine precisely the graphs $G_{L+k}$:

\begin{coro}
Let $L(x)$ be a $2-$NLP over $\F_{q^n}$ and $\gamma$ be an element of order $s$ in the multiplicative group $\F_{q}^*$. Then the functional graph $G_{L+\gamma x}$ has one cycle of length $1$, $\displaystyle \frac{z_L-1}{s}$ cycles of length $s$ and $\displaystyle\frac{q^n-z_L}{ps}$ cycles of length $ps$, where $z_L=\# Z_L$ is the number of roots of $L$ in $\F_{q^n}$. In particular, if $L_1$ and $L_2$ are $2-$NLP's over $\F_{q^n}$ and $\gamma_1, \gamma_2\in \F_q^*$, then the graphs $G_{L_1+\gamma_1 x}$ and $G_{L_2+\gamma_2 x}$ have the same cycle structure (hence isomorphic) if, and only if,  $z_{L_1}=z_{L_2}$ and $\ord \gamma_1=\ord \gamma_2$.
\end{coro}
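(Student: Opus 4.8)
The plan is to apply Theorem \ref{cycle} with the particular choice $k(x)=\gamma x$. First I would verify the hypotheses: since $\gamma\in\F_q^*$, the map $k(x)=\gamma x$ is a linearized permutation polynomial, it commutes with $L$ (as $L(\gamma z)=\gamma L(z)$ for $\gamma\in\F_q$), and $\Ord(k)=\ord\gamma=s$. Moreover $s$ divides $q-1$, so $\Gcd(s,p)=1$ and part (b) of Theorem \ref{cycle} is available. Hence for every $a\in\F_{q^n}$ the cycle length $m_a'$ of $a$ in $G_{L+\gamma x}$ is determined by its cycle length $m_a$ in $G_k$ together with whether $L(a)=0$: one has $m_a'=m_a$ if $L(a)=0$, and $m_a'=p\,m_a$ otherwise.

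Next I would describe $G_k$ explicitly. Under $z\mapsto\gamma z$ the element $0$ is fixed, while for $a\ne 0$ the least $d$ with $\gamma^d a=a$ is the least $d$ with $\gamma^d=1$, i.e. $m_a=s$; so every nonzero vertex of $G_k$ lies on a cycle of length $s$. Combining this with the formula for $m_a'$, I would partition $\F_{q^n}$ into three pieces according to the value of $m_a'$: the single vertex $0$ (where $L(0)=0$ and $m_0=1$, giving $m_0'=1$); the vertices of $Z_L\setminus\{0\}$ (where $L(a)=0$ and $m_a=s$, giving $m_a'=s$); and the vertices of $\F_{q^n}\setminus Z_L$ (where $L(a)\ne 0$ and $m_a=s$, giving $m_a'=ps$). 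Since cycles partition the vertex set, the number of cycles of a given length $\ell$ equals the number of vertices $a$ with $m_a'=\ell$, divided by $\ell$; this yields one cycle of length $1$, then $\frac{z_L-1}{s}$ cycles of length $s$, and $\frac{q^n-z_L}{ps}$ cycles of length $ps$, as claimed. When $s=1$ the lengths $1$ and $s$ coincide and the stated total $1+(z_L-1)=z_L$ of fixed points remains correct, so no separate treatment is needed. To see the displayed counts are integers I would use that $Z_L$ is a nonzero proper $\F_q$-subspace of $\F_{q^n}$ (nonzero because $L^{(2)}\equiv 0$ forces $V_L\subseteq Z_L$ with $V_L\ne\{0\}$, proper because $L\not\equiv 0$); thus $z_L=q^d$ with $1\le d<n$, so $s\mid q-1\mid q^d-1=z_L-1$, while $ps\mid q^n-z_L$ follows automatically from the cycle partition.

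Finally, for the "in particular" statement I would invoke the standard fact that functional graphs of two permutations of a common set are isomorphic as directed graphs if and only if they have the same multiset of cycle lengths. The forward direction is then immediate: if $z_{L_1}=z_{L_2}$ and $\ord\gamma_1=\ord\gamma_2$, the three-part count above is literally identical for both graphs. For the converse I would show that $s=\ord\gamma$ and $z_L$ can each be read off from the cycle type. The largest cycle length present is $ps$ (such a cycle exists because $z_L<q^n$, and all other lengths are proper divisors of it since $p\ge 2$), so $s$ is recovered as (largest length)$/p$ with $p$ fixed by the ambient field; and the number of vertices lying on these longest cycles is exactly $q^n-z_L$, so $z_L$ is recovered as $q^n$ minus that count. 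Hence equal cycle structure forces $\ord\gamma_1=\ord\gamma_2$ and $z_{L_1}=z_{L_2}$. I expect the only delicate point to be handling the degenerate case $s=1$ uniformly — where the length-$1$ and length-$s$ cycles merge — and confirming that $ps$ is genuinely the strictly largest cycle length, so that both parameters can be extracted unambiguously in the converse.
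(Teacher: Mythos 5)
Your proposal is correct and takes essentially the same route as the paper: the paper's proof likewise observes that in $G_{\gamma x}$ zero is fixed and every nonzero element lies on a cycle of length $s$, notes $\Gcd(s,p)=1$ since $s\mid q-1$, and invokes part (b) of Theorem \ref{cycle} to get the three cycle lengths and counts. Your additional checks --- that $z_L=q^d$ with $1\le d<n$ so the counts are integers, the uniform treatment of $s=1$, and the recovery of $s$ and $z_L$ from the cycle type for the converse (which the paper dismisses with ``follows directly from the first'') --- are all sound and merely make explicit what the paper leaves to the reader.
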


\begin{proof}
For the first statement, notice that any nonzero element belongs to a cycle of length $s$ in $G_{\gamma x}$ and the zero element is a fixed point. Since $\Ord(\gamma x)=s$ and $s$ divides $q-1$, we have that $\Gcd(d, p)=1$ and now the result follows from part b) of Theorem \ref{cycle}. The second statement follows directly from the first.

\end{proof}

\subsection{Involutions in binary fields}
Here we are interested in the construction of involutions over binary fields with no fixed points. Let $q$ be a power of $2$ and $L(x)$ be any $2-$NLP over $\F_{q^n}$. From Theorem \ref{PP} we know that $L(x)+x$ is a permutation polynomial over $\F_{q^n}$ and it can be verified that $L(x)+x$ is in fact an involution over $\F_{q^n}$. However, $L(x)+x$ has many fixed points which are exactly the roots of $L(x)$ over $\F_{q^n}$. The following proposition shows how to completely eliminate those fixed points:

\begin{propo}\label {invo}
Let $\F_{q^n}$ be a finite field such that $\Char(\F_{q^n})=2$ and $L(x)$ be any $2-$NLP over $\F_{q^n}$ such that $V_L\subsetneq Z_L$. Then, for any $a\in Z_L\setminus V_L$, the polynomial $f(x)=L(x)+x+a$ is an involution over $\F_{q^n}$ with no fixed points. In particular, if $\dim_{\F_q} V_L< n/2$ then there is some element $b\in Z_L\setminus V_L$.
\end{propo}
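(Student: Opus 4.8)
The plan is to settle the three assertions in turn, all of which reduce to short additivity computations in characteristic $2$. Throughout I would use that $L$ is additive and $\F_q$-linear, that $L^{(2)}(z)=0$ for every $z\in\F_{q^n}$, and that $a\in Z_L$ gives $L(a)=0$. A preliminary observation I would record first is that for any $2$-NLP one has $V_L\subseteq Z_L$: indeed every value $L(z)$ is annihilated by $L$, since $L(L(z))=L^{(2)}(z)=0$. This is what makes the hypothesis $V_L\subsetneq Z_L$ and the set $Z_L\setminus V_L$ meaningful, and it will also feed the final dimension count.

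First I would verify that $f(x)=L(x)+x+a$ is an involution. Writing $y=L(z)+z+a$ for a given $z\in\F_{q^n}$ and expanding $f(y)=L(y)+y+a$ via additivity of $L$, the term $L(y)$ becomes $L^{(2)}(z)+L(z)+L(a)=L(z)$, since $L^{(2)}(z)=0$ and $L(a)=0$. Substituting back gives $f(f(z))=L(z)+\bigl(L(z)+z+a\bigr)+a$, and in characteristic $2$ the two copies of $L(z)$ and the two copies of $a$ cancel, leaving $f(f(z))=z$. Hence $f(f(x))\equiv x\pmod{x^{q^n}-x}$.

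Next I would check the absence of fixed points. A fixed point is a $z\in\F_{q^n}$ with $L(z)+z+a=z$, i.e.\ $L(z)=a$ (again using $-a=a$ in characteristic $2$). Such a $z$ would place $a$ in the value set $V_L$; but by hypothesis $a\in Z_L\setminus V_L$, so $a\notin V_L$ and no such $z$ can exist.

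Finally, for the existence claim I would invoke rank--nullity for the $\F_q$-linear map induced by $L$ on $\F_{q^n}$, namely $\dim_{\F_q}V_L+\dim_{\F_q}Z_L=n$. Combined with the inclusion $V_L\subseteq Z_L$ noted above, the hypothesis $\dim_{\F_q}V_L<n/2$ forces $\dim_{\F_q}Z_L=n-\dim_{\F_q}V_L>n/2>\dim_{\F_q}V_L$, so the inclusion is strict and $Z_L\setminus V_L$ is nonempty. There is no genuine obstacle in this proposition; the computations are routine and the only point that truly requires care is the preliminary fact $V_L\subseteq Z_L$, which simultaneously justifies the meaningfulness of the hypothesis and drives the dimension argument.
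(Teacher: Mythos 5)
Your proof is correct and follows essentially the same route as the paper's: the same direct computation showing $f(f(z))=z$ from $L^{(2)}(z)=0$, $L(a)=0$ and characteristic $2$, the same fixed-point argument via $L(z)=a$ contradicting $a\notin V_L$, and the same rank--nullity dimension count combined with the inclusion $V_L\subseteq Z_L$. The only difference is that you spell out the ``direct calculation'' the paper leaves implicit, which is fine.
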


\begin{proof}
Since $L(a)=0$ and $\Char(\F_{q^n})=2$, a direct calculation shows that $f(x)=L(x)+x+a$ is an involution over $\F_{q^n}$. If $f(x)$ has a fixed point $\alpha\in \F_{q^n}$, then $f(\alpha)=\alpha$ and so $L(\alpha)=a$, which is impossible since $a\not\in V_L$. Thus $f(x)$ has no fixed points. 

Since $L^{(2)}(z)=0$ for any $z\in \F_{q^n}$ we have that $V_L\subset Z_L$. If $\dim_{\F_q} V_L< n/2$ then $\dim_{\F_q} Z_L= n-\dim_{\F_q} V_L>n/2$ and so $V_L\subsetneq Z_L$. Thus there is some element $b\in Z_{L}\setminus V_L$.
\end{proof}

In particular we have the following:

\begin{coro}\label{inv}
Let $q$ be a power of 2 and let $k=\F_{q^m}$ and $K=\F_{q^n}$ be fields such that $k\subset K$ and $m<n/2$. If $\theta$ is an element of $\F_{q^n}^{*}$ and $\Tr_{K/k}(\theta)=0$, then there exists an element $\alpha\in \F_{q^n}$ such that $\Tr_{K/k}(\alpha)=0$ and $\alpha\not\in \theta\cdot \F_{q^m}$. In particular, 
$$f(x)=\theta\cdot \Tr_{K/k}(x)+x+\alpha$$ 
is a involution over $\F_{q^n}$ with no fixed points.
\end{coro}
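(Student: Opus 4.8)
The plan is to deduce Corollary \ref{inv} directly from Proposition \ref{invo} by exhibiting a suitable $2$-NLP and verifying the dimension hypothesis $\dim_{\F_q} V_L < n/2$ that guarantees the existence of the required element. First I would set $L(x) = \theta \cdot \Tr_{K/k}(x)$. By the first example following Theorem \ref{main}, since $\Tr_{K/k}(\theta) = 0$ with $\theta \in \F_{q^n}^*$, this $L$ is a $2$-NLP over $\F_{q^n}$ whose value set is $V_L = \theta \cdot \F_{q^m}$. Because $\theta \ne 0$, scaling by $\theta$ is an $\F_q$-linear isomorphism, so $\dim_{\F_q} V_L = \dim_{\F_q} \F_{q^m} = m$.

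Next I would invoke the dimension bound. The hypothesis $m < n/2$ gives $\dim_{\F_q} V_L = m < n/2$, so the final clause of Proposition \ref{invo} applies and yields an element $\alpha \in Z_L \setminus V_L$. Unwinding the definitions, $\alpha \in Z_L$ means $L(\alpha) = \theta \cdot \Tr_{K/k}(\alpha) = 0$, hence $\Tr_{K/k}(\alpha) = 0$ since $\theta \ne 0$; and $\alpha \notin V_L = \theta \cdot \F_{q^m}$. This produces precisely the asserted element with $\Tr_{K/k}(\alpha) = 0$ and $\alpha \notin \theta \cdot \F_{q^m}$.

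Finally I would apply the main conclusion of Proposition \ref{invo}: since $\Char(\F_{q^n}) = 2$ and $\alpha \in Z_L \setminus V_L$, the polynomial $f(x) = L(x) + x + \alpha = \theta \cdot \Tr_{K/k}(x) + x + \alpha$ is an involution over $\F_{q^n}$ with no fixed points. This is exactly the claimed statement, so the proof is complete.

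I do not expect any genuine obstacle here, as the corollary is a packaged specialization of Proposition \ref{invo}. The only point requiring a small amount of care is the translation between the abstract conditions $\alpha \in Z_L$, $\alpha \notin V_L$ and their concrete counterparts $\Tr_{K/k}(\alpha) = 0$, $\alpha \notin \theta \cdot \F_{q^m}$; this rests entirely on $\theta$ being a nonzero scalar, so that multiplication by $\theta$ is injective and $L(\alpha) = 0 \iff \Tr_{K/k}(\alpha) = 0$. One should also confirm that the trace $\Tr_{K/k}$ is indeed the one appearing in the cited example, namely $\sum_{i=0}^{n/m - 1} x^{q^{im}}$, which it is by the standard definition of the relative trace of $\F_{q^n}$ over $\F_{q^m}$.
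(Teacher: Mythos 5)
Your proposal is correct and follows essentially the same route as the paper: both take $L(x)=\theta\cdot \Tr_{K/k}(x)$, observe that $V_L=\theta\cdot\F_{q^m}$ has $\F_q$-dimension $m<n/2$, and conclude via Proposition \ref{invo}. Your version merely spells out the translation between $\alpha\in Z_L\setminus V_L$ and the concrete conditions $\Tr_{K/k}(\alpha)=0$, $\alpha\notin\theta\cdot\F_{q^m}$, which the paper leaves implicit.
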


\begin{proof}
In the notation of Proposition \ref{invo}, take $L(x)=\theta \cdot \Tr_{K/k}(x)$ and notice that $V_{L}=\theta \cdot \F_{q^m}$ has dimension $m<n/2$ as an $\F_q-$vector space. Now the result follows directly from Proposition \ref{invo}.
\end{proof}

The corollary above suggests explicit constructions of involutions with no fixed points which can be represented by \textit{sparse polynomials}, i.e., polynomials with few nonzero coefficients. For instance, let $m$ be any positive integer and $n=4m$. If $f(x)$ is any irreducible polynomial over $\F_{2}$ and has degree $n$, then $\F_{2^n}=\F_2[x]/(f(x))=\F_2[\beta]$, where $\beta$ is the coset of $x$ in the quotient $\F_2[x]/(f(x))$. Take $K=\F_{2^n}$ and $k=\F_{2^m}$, hence
$$\Tr_{K/k}(x)=x^{2^{3m}}+x^{2^{2m}}+x^{2^m}+x.$$ 
A direct calculation shows that $\Tr_{K/k}(1)=\Tr_{K/k}(\beta^{2^m}+\beta)=0$. But if $\beta^{2^m}+\beta \in \F_{2^m}$ then 
$$\beta^{2^{2m}}+\beta^{2^m}=(\beta^{2^m}+\beta)^{2^m}=\beta^{2^m}+\beta,$$ 
hence $\beta^{2^{2m}}=\beta$, i.e., $\beta\in \F_{2^{2m}}$. Therefore $\F_{2^n}=\F_2[\beta]\subset \F_{2^{2m}}$, a contradiction since $n=4m$. In conclusion, $\beta^{2^m}+\beta\not\in \F_{2^{2m}}$ and then taking $\alpha=\beta^{2^m}+\beta$ and $\theta=1$ as in Corollary \ref{inv} we have that
$$f(x)=x^{2^{3m}}+x^{2^{2m}}+x^{2^m}+\beta^{2^m}+\beta$$
is an involution over $\F_{2^n}=\F_2[\beta]$ with no fixed points.
\begin{exam}
Let $\F_{2^{32}}=\F_2[x]/(x^{32}+x^7+x^3+x+1)=\F_2[\beta]$, where $\beta$ is the coset of $x$ in the quotient $\F_2[x]/(x^{32}+x^7+x^3+x+1)$. The polynomial
$$f(x)=x^{2^{24}}+x^{2^{16}}+x^{2^8}+\beta^{2^8}+\beta$$
is an involution over $\F_{2^{32}}$ and has no fixed points.

\end{exam}


\end{document}